\documentclass[11pt,reqno]{amsart}

\usepackage{mathrsfs,amsmath,amsthm,amssymb,thmtools,tikz-cd,comment,hyperref,cleveref,enumitem,breqn,float,caption,subfig,ytableau,tikz,cases,multirow,mathtools}
\usepackage{xcolor}
\usetikzlibrary{positioning, arrows.meta, calc}

\usetikzlibrary{decorations.pathreplacing,
	calligraphy,
	matrix}

\allowdisplaybreaks
\theoremstyle{plain}
\newtheorem{theorem}{Theorem}[section]
\newtheorem{lemma}[theorem]{Lemma}
\newtheorem{proposition}[theorem]{Proposition}

\newtheorem{thmalphabetintro}{Theorem}

\newtheorem{thmalphabetmaintext}{Theorem}

\theoremstyle{definition}

\newtheorem{question}[theorem]{Question}

\newtheorem{open problem}[theorem]{Open Problem}
\newtheorem{remark and notation}[theorem]{Remark and Notation}
\newtheorem{remark and definition}[theorem]{Remark and Definition}
\newtheorem{definition and notation}[theorem]{Definition and Notation}
\newtheorem{notation and convention}[theorem]{Notation and convention}
\newtheorem{convention and notation}[theorem]{Convention and notation}

\def \p {\mathbb{P}}
\def \a {\mathbb{A}}

\def \c {\mathbb{C}}

\def \q {\mathbb{Q}}

\def \Im {\operatorname{Im}}

\def \GL {\operatorname{GL}}
\def \rank {\operatorname{rank}}

\def \det {\operatorname{det}}
\def \per {\operatorname{perm}}
\def \rk {\operatorname{\mathbf{R}}}
\def \wrk {\operatorname{\mathbf{WR}}}

\def \brk {\operatorname{\underline{\mathbf{R}}}}

\def \sgn {\operatorname{sgn}}
\def \hom {\operatorname{Hom}}

\def \Seg {\operatorname{Seg}}
\def \Mat {\operatorname{Mat}}
\def \Sym {\operatorname{Sym}}
\def \Gr {\operatorname{Gr}}

\def \la{\langle}
\def \ra{\rangle}

\def \vmd[#1]{\text{VMD$^{#1}$}}
\def \dpv[#1]{\text{dP$^{#1}$}}
\def \acm[#1,#2]{\text{ACMD$^{#1}_{#2}$}}
\def \propp[#1,#2]{\text{$P_{{#1},{#2}}$}}
\def \propa[#1,#2]{\text{$A_{{#1},{#2}}$}}

\def \blue[#1]{\textcolor{blue}{#1}}

\newcommand{\W}{\tikz{\useasboundingbox (0,0) rectangle (0.5,0.5);
                      \draw[fill=white]   (0,0) rectangle (0.5,0.5);}}
\newcommand{\G}{\tikz{\useasboundingbox (0,0) rectangle (0.5,0.5);
                      \draw[fill=gray!50] (0,0) rectangle (0.5,0.5);}}
\newcommand{\N}{\tikz{\useasboundingbox (0,0) rectangle (0.5,0.5);}}

\title[The rank of the $5\times 5$ permanent tensor is 16]{The rank of the $5\times 5$ permanent tensor is sixteen}
\author{Jong In Han}
\address{Jong In Han, School of Mathematics, Korea Institute for Advanced Study (KIAS), 85 Hoegi-ro, Dongdaemun-gu, Seoul, 02455, Republic of Korea}
\email{jihan09@kias.re.kr}

\author{Jeyoung Song}
\address{Jeyoung Song, Department of Mathematics and Statistical Science, University of Idaho, 875 Perimeter Drive, MS 1103, Moscow, Idaho 83844-1103, United States of America}
\email{jeyoungsong@uidaho.edu}

\keywords{tensor rank, permanent tensor, rank locus, squarefree monomial}
\subjclass[2020]{Primary 14N07; Secondary 15A15}

\begin{document}
\begin{abstract}
	In this paper, we show that the tensor rank of the $5\times 5$ permanent tensor is exactly $16$ over fields of characteristic zero, by proving the lower bound matching the known upper bound. The $5\times 5$ permanent tensor is a symmetric tensor corresponding to the monomial $x_1x_2x_3x_4x_5$, whose Waring rank is known to be $16$. Previously, it was known, by the higher-order Koszul flattening and Fischer's formula, that its tensor rank is either $15$ or $16$.
\end{abstract}
\maketitle

\section{Introduction}\label{sec: introduction}
Let $\Bbbk$ be a field of characteristic zero and $V_i=\Bbbk^{n_i}$ for $i=1,\ldots,d$, and let $T\in V_1\otimes\cdots\otimes V_d$. The \emph{tensor rank} $\rk(T)$ is the smallest integer $r$ such that $T$ can be written as a sum of $r$ rank-one tensors. When $d=2$, this notion coincides with the usual rank of a matrix. Unlike matrix rank, which can be computed by elementary linear algebra, tensor rank is notoriously difficult to determine for tensors of order $d\geq 3$. For example, for the matrix multiplication tensor \[M_{\la n,m,k\ra}\coloneqq\sum_{u,v,w} a_{u,v}\otimes b_{v,w}\otimes c_{w,u}\in\Bbbk^{nm}\otimes \Bbbk^{mk}\otimes \Bbbk^{kn},\] where $a_{u,v}$, $b_{v,w}$, and $c_{w,u}$ denote the standard basis vectors of the three factors, the rank is known exactly only in the cases $(n,m,k)=(2,2,2)$, $(2,2,3)$, and $(2,2,4)$, up to permutation of $n,m,k$ \cite{MR248973,MR297115,MR780851,MR3457248}. Tensor rank is also central in algebraic complexity theory, since it often measures the number of arithmetic operations required by a corresponding algorithm. In particular, for $T\in V_1\otimes V_2\otimes V_3$, the tensor rank of $T$ is the minimum number of multiplications required by a bilinear algorithm, without exploiting commutativity, to compute the associated bilinear map $V_1^*\times V_2^*\to V_3$.

The determinant and permanent are a classical pair to compare in complexity theory, and separating them by a complexity measure is a long-standing problem, of which Valiant's conjecture \cite{MR564634} is the most famous instance.
Valiant's conjecture concerns them as polynomials of degree $d$ in $d^2$ variables, whereas the object of this paper is the permanent tensor in $(\Bbbk^d)^{\otimes d}$.

Let $V_1=\cdots=V_d=\Bbbk^d$ and let $e_1,\ldots,e_d$ be the standard basis of $\Bbbk^d$. The \emph{determinant} and \emph{permanent} tensors of order $d$ are defined by
\begin{align*}
\det_d
&=
\sum_{\sigma\in\mathfrak{S}_d}
\sgn(\sigma)e_{\sigma(1)}\otimes\cdots\otimes e_{\sigma(d)}
\in
V_1\otimes\cdots\otimes V_d,\\
\per_d
&=
\sum_{\sigma\in\mathfrak{S}_d}
e_{\sigma(1)}\otimes\cdots\otimes e_{\sigma(d)}
\in
V_1\otimes\cdots\otimes V_d
\end{align*}
where $\mathfrak{S}_d$ is the symmetric group on $d$ letters.
For every $d\ge 3$, the tensors $\det_d$ and $\per_d$ are separated by their tensor ranks: $\rk(\per_d)<\rk(\det_d)$; see \cite{HJK25} for details.

Note that $\per_d$ is a symmetric tensor.
Under the standard identification of symmetric tensors in $V_1\otimes\cdots\otimes V_d$ with degree-$d$ forms in $d$ variables, it corresponds to the squarefree monomial
\[
    x_1x_2\cdots x_d
\]
up to a nonzero scalar.

The best known upper bound for $\rk(\per_d)$ comes from an explicit decomposition:
\[
\per_d
=
\frac{1}{2^{d-1}}
\sum_{\substack{(\delta_1,\ldots,\delta_d)\in\{1,-1\}^{d}\\ \delta_1=1}}
\left(\prod_{k=1}^d \delta_k\right)
\left(\sum_{j=1}^d \delta_j e_j\right)
\otimes\cdots\otimes
\left(\sum_{j=1}^d \delta_j e_j\right).
\]
The identity has its roots in classical polarization formulas, with antecedents in Serret \cite{Ser1869} and Mazur--Orlicz \cite{MO34}, and a later explicit formulation in Fischer \cite{MR1573008}.
Note that this is a Waring decomposition.
Thus
\[
\rk(\per_d)\le \wrk(\per_d)\leq 2^{d-1}
\]
where $\wrk(T)$ denotes the \emph{Waring rank} (or the \emph{symmetric rank}) of a symmetric tensor $T$, i.e., the smallest integer $r$ such that $T$ can be written as a sum of $r$ rank-one symmetric tensors.
This decomposition is optimal among symmetric decompositions: the matching lower bound $\wrk(\per_d)\ge 2^{d-1}$ was shown by Ranestad and Schreyer \cite{MR2842085}, so that \[\wrk(\per_d)=2^{d-1}.\]
However, a rank decomposition need not consist of rank-one symmetric tensors, so this does not settle $\rk(\per_d)$ and the exact value was previously unknown for every $d\ge 5$.

The \emph{border rank} $\brk(T)$ of a tensor $T$ is the smallest integer $r$ such that $T$ lies in the Zariski closure of the set of tensors of rank at most $r$.
In 2016, Ilten and Teitler \cite{MR3492642} showed
\[
    \rk(\per_3)\ge 4,
\]
and in 2019, Derksen and Makam \cite{MR3987583} showed
\[
    \brk(\per_3)\ge 4.
\]
Together with Fischer's formula, these determine the rank and border rank of $\per_3$:
\[
    \brk(\per_3)=\rk(\per_3)=4.
\]
In 2016, Derksen \cite{MR3494510} showed
\[
\rk(\per_d)\ge \brk(\per_d)\ge \binom{d}{\lfloor d/2\rfloor},
\]
and in 2024, Houston, Goucher, and Johnston \cite{MR4822414} improved the lower bound for $\rk(\per_d)$ by one:
\[
\rk(\per_d)\ge \binom{d}{\lfloor d/2\rfloor}+1.
\]
To the best of our knowledge, no better lower bound is known for $d\ge 8$.
Later, we see the same bound holds for the border rank over $\c$ and its subfields (\Cref{prop:brk_large_d}).
For $d\le 7$, stronger bounds are known: in 2021, Krishna and Makam \cite{MR4284788} showed
\[
    \brk(\per_5)\ge 13\text{ and }\brk(\per_7)\ge 42,
\]
and recently, the first author, with Ju and Kim \cite{HJK25}, showed 
\[
    \brk(\per_4)\ge 8\text{, }\brk(\per_5)\ge 15\text{, }\brk(\per_6)\ge 29\text{, }\brk(\per_7)\ge 55.
\]
Note that this determines the rank and border rank for $d=4$: \[\brk(\per_4)=\rk(\per_4)=8.\]

The exact value of $\rk(\per_5)$, previously known to be either 15 or 16, is the subject of this paper: we show $\rk(\per_5)=16$ over fields of characteristic zero.
The proof requires computing the locus where a $2500\times 2500$ matrix with variables has rank at most 1344, which is infeasible to compute via the ideal of minors since there are $\binom{2500}{1345}^2$ minors.
We overcome this by using Conner--Harper--Landsberg's method described in \Cref{sec: preliminaries}.

\begin{thmalphabetintro}\label{thmintro: rk perm5 is 16}
	It holds that $\rk(\per_5)=16$ over fields of characteristic zero.
\end{thmalphabetintro}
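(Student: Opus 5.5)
The upper bound $\rk(\per_5)\le 16$ follows from Fischer's formula, and $\rk(\per_5)\ge\brk(\per_5)\ge 15$ is known from \cite{HJK25}. So it suffices to rule out a decomposition of length exactly $15$. Suppose for contradiction that $\per_5=\sum_{i=1}^{15} a_i^{(1)}\otimes\cdots\otimes a_i^{(5)}$. I would argue with the higher-order Koszul flattening used to obtain the bound $15$ in \cite{HJK25}. This is a linear map $\phi_T$, built from $T$ and realized by a $2500\times 2500$ matrix, for which every rank-one tensor $p$ has $\rank\phi_p\le\rho$, where $\rho$ is the rank on points. One also has $\rank\phi_{\per_5}=15\rho-\epsilon$ with small slack $\epsilon$. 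Since $\phi$ is linear, $\phi_{\per_5}=\sum_i\phi_{p_i}$, so the images $\Im\phi_{p_i}$ must span a space of dimension at least $15\rho-\epsilon$. Hence they are nearly independent, and each $\rank\phi_{p_i}$ is forced to be essentially $\rho$.

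The key step is to exploit the near-tightness in the style of Conner--Harper--Landsberg. Write $T=T'+p_{15}$ with $T'=\sum_{i\le 14}p_i$. Then $\rank\phi_{T'}\ge\rank\phi_{\per_5}-\rho$, and $p_{15}$ must satisfy a strong rank condition. For instance, $\rank\phi_{\per_5-p}\le 14\rho$, which is the condition ``rank at most $1344$'' mentioned in the introduction, so $14\rho=1344$. This condition must also hold for every $p_i$ in the decomposition. So I would study the rank locus $Z=\{p\in\Seg(\p^4\times\cdots\times\p^4) : \rank\phi_{\per_5-p}\le 1344\}$. Computing it via minors is infeasible. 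Instead, following Conner--Harper--Landsberg, I would use that $\phi_{\per_5-p}=\phi_{\per_5}-\phi_p$, where $\phi_p$ has low rank and factors through the tangent-like structure of $p$. The condition then becomes a rank condition on a much smaller matrix, obtained by restricting $\phi_{\per_5}^{-1}$ (or the relevant cokernel/kernel pairing) to $\Im\phi_p$ and $\ker\phi_p^{\perp}$. The rank drop of $\phi_{\per_5}-\phi_p$ is governed by a $\rho\times\rho$ matrix $M(p)$ of the form $\mathrm{Id}-(\text{pairing of }\phi_p\text{ against a generalized inverse of }\phi_{\per_5})$, plus correction terms coming from $\ker\phi_{\per_5}$.

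Next I would use symmetry to make $Z$ computable. The stabilizer of $\per_5$ contains the torus $\{(t_1,\dots,t_5)\}$ in each factor, with products constrained, together with permutations of coordinates and of factors. Each factor $a^{(j)}$ can therefore be normalized according to its support pattern of nonzero coordinates. Splitting into finitely many cases by support, and reducing each case by the torus action, I would compute the ideal of $Z$ case by case, expecting a symbolic or Gröbner computation over $\q$. The expected outcome is that $Z$ consists of very special points, such as those whose factors are coordinate vectors or have small supports. Then I would show that $15$ points of $Z$ cannot sum to $\per_5$. One way is an elementary argument: a rank-one tensor with coordinate-like factors contributes only to a restricted set of the $120$ monomials $e_{\sigma(1)}\otimes\cdots\otimes e_{\sigma(5)}$, and the substitution method or a counting argument shows that $15$ of them cannot cover. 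Another way is to show $\dim\langle Z\rangle$ or the spans of $\Im\phi_p$ over $p\in Z$ is too small. Characteristic zero enters through the Koszul flattening and the normalization by rational computations, and an extension-of-scalars argument reduces to $\Bbbk=\c$.

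The main obstacle is the computation of $Z$, equivalently of the $1344$-rank locus. The concrete plan is to replace it with the small matrix $M(p)$ and handle $\ker\phi_{\per_5}$ carefully. It may also be necessary to pass to a two-point condition, when the one-point locus $Z$ is not small enough, and require $\rank\phi_{\per_5-p-q}\le 13\rho$ for pairs.
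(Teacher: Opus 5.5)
Your overall strategy is the paper's. A length-$15$ decomposition forces each summand $p$ to satisfy $\rank\,(\per_5-p)^{\wedge(1,2,3)}\le 14\cdot 96=1344$. One then normalizes $p$ by the torus and the two symmetric-group actions according to the support pattern of its factors (the paper gets seven cases, indexed by partitions of $5$), and decides the resulting rank loci by computer algebra over $\q$. Two parts of your plan are off, though.

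\textbf{The outcome of the computation.} The paper's computation returns the unit ideal in every case, so your locus $Z$ is \emph{empty}. This includes the diagonal case $p=c\,e_1\otimes e_2\otimes e_3\otimes e_4\otimes e_5$, which is exactly the kind of coordinate-vector point you expected to survive. The proof therefore ends there: $\brk(\per_5-p)\ge 15$ for every nonzero rank-one $p$, hence $\rk(\per_5)\ge 16$. Your second stage (counting, substitution, or a two-point condition to show that $15$ points of $Z$ cannot sum to $\per_5$) is unnecessary. As sketched it is not an argument, so had $Z$ been nonempty it would have been a real hole. Also, $Z$ must live on the affine cone over the Segre variety, not in $\Seg(\p^4\times\cdots\times\p^4)$. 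The tensor $\per_5-p$ depends on the scale of $p$, and in the diagonal case the product $s_{1,1}s_{2,2}s_{3,3}s_{4,4}s_{5,5}$ is invariant under all the torus actions, so it must remain a free parameter.

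\textbf{The reduction to a $\rho\times\rho$ matrix.} Your $M(p)$ is misdescribed, and it is not the Conner--Harper--Landsberg method the paper uses. Set $A=\phi_{\per_5}$, which has rank $1426$, so its kernel and cokernel have dimension $1074$. Write $\phi_p=UV^{T}$ with $U,V$ of width $96$, and let $A^-$ be a generalized inverse. One has $\rank(A-UV^{T})\ge 1426-\dim(\Im A\cap \Im \phi_p)$, and the dual statement holds for row spaces. Hence rank at most $1344$ forces $\Im\phi_p$ to meet $\Im A$ in dimension at least $82$, and likewise for row spaces. These kernel/cokernel conditions are the main constraint, not corrections to $\mathrm{Id}-V^{T}A^{-}U$. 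The correct generalized Schur complement (Marsaglia--Styan) is a bordered matrix. Its off-diagonal blocks, namely $\Im\phi_p$ modulo $\Im A$ and $\phi_p$ restricted to $\ker A$ (roughly $1074\times 96$ each), must have rank at most $14$. That is still a polynomial rank-locus problem far beyond a minors or Gr\"obner computation. The paper instead runs the recursive algorithm of Section~2 directly on the $2500\times 2500$ matrix. It pivots on unit entries, and otherwise branches on $f=0$ versus $f\neq 0$ for a chosen entry $f$, recording the relations in an ideal. This is what makes the locus computable and what certifies that it is empty.
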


This answers \cite[Question 5.8]{HJK25} for $d=5$, which asks whether $\rk(\per_d)=2^{d-1}$ for each $d\ge 5$.

\section*{Acknowledgement}
J. I. Han was supported by a KIAS Individual Grant (MG101401) at Korea Institute for Advanced Study.

\section{Conner--Harper--Landsberg's method on the rank locus}\label{sec: preliminaries}
In \cite{MR4595287}, Conner, Harper, and Landsberg introduced an algorithm to find the locus $X\subseteq \a_\Bbbk^r$ of the points $(x_1,\dots,x_r)\in \a_\Bbbk^r$ such that $\rank M(x_1,\dots,x_r)\le k$ where $M\in\Mat_{m\times n}(\Bbbk[x_1,\ldots,x_r])$ and $\Bbbk$ is an algebraically closed field.
Although the ideal of $(k+1)\times (k+1)$-minors of $M$ defines the same locus, computing the ideal of minors becomes infeasible very fast as $m,n,k$ increase.

Let $M\in\Mat_{m\times n}(R)$ where $R=(\Bbbk[x_1,\dots,x_r]/I)_g$ for some nonzero $g\in \Bbbk[x_1,\dots,x_r]/I$.
We want to find the locus $X(M,I,g,k)$ of points $(x_1,\dots,x_r)\in V(I)\cap D(g)$ such that $\rank(M(x_1,\dots,x_r))\le k$ by a recursive algorithm.
Note that since $D(g)$ is open, the locus $X(M,I,g,k)$ need not be closed.
The algorithm returns an ideal $J$ such that
\begin{equation}\label{eqn: V(J)}
    V(J)=\overline{X(M,I,g,k)}.
\end{equation}
The closure does not harm the recursion as closure commutes with finite unions.

Suppose $M$ has a unit as an entry, i.e., $M_{i,j}$ is invertible in $R$ for some $i,j$.
In that case, we perform row and column operations to make all other entries on the $i$-th row and $j$-th column zero.
Then we remove the $i$-th row and $j$-th column and denote the $(m-1)\times(n-1)$ matrix by $M'$.
Now, the locus $X(M,I,g,k)$ is the same as the set $X(M',I,g,k-1)$.
Hence this reduces the problem and we repeat this as much as possible.

However, there may not be any unit in the matrix at some step.
In that case, we pick a nonzero entry $M_{i,j}=:f$ and divide the cases to use the principle
\begin{equation}\label{eqn: principle}
    X(M,I,g,k)=(X(M,I,g,k)\cap V(f))\cup (X(M,I,g,k)\cap D(f)).
\end{equation}
In other words, we determine the locus where $f=0$ at $(x_1,\dots,x_r)$ and the locus where $f\ne 0$ at $(x_1,\dots,x_r)$ separately.

When $f=0$ at $(x_1,\dots,x_r)$, we need to determine the locus of points $(x_1,\dots,x_r)\in V(I+(f))\cap D(g)$ at which $\rank M_1(x_1,\dots,x_r)\le k$ where $M_1\in\Mat_{m\times n}(R/(f))$ is the matrix $M$ considered as a matrix over the ring $R/(f)$.
This case reduces to finding $X(M_1,I+(f),g,k)$.

When $f\ne 0$ at $(x_1,\dots,x_r)$, we can regard $f$ as a unit.
Using $f$ as a pivot, we eliminate other entries in the same row and column as $f$.
Then we remove that row and column and denote the resulting $(m-1)\times (n-1)$ matrix by $M_2$.
We consider $M_2$ as a matrix over the ring $R_f$, i.e., $M_2\in \Mat_{(m-1)\times(n-1)}(R_f)$. 
Then it is equivalent to determine the locus of points $(x_1,\dots,x_r)\in V(I)\cap D(gf)$ at which $\rank M_2(x_1,\dots,x_r)\le k-1$.
This case reduces to finding $X(M_2,I,gf,k-1)$.

Note that in both cases, the problem reduces: in the case $f=0$, the number of zero entries increases, while in the case $f\ne 0$, all of $m,n,k$ decrease by one.
The recursion terminates at the following cases.
\begin{itemize}
    \item $X(M,I,g,k)=V(I)\cap D(g)$ when $M=0$ and $k\ge 0$;
    \item $X(M,I,g,k)=V(I+(M_{i,j})_{i,j})\cap D(g)$ when $k=0$;
    \item $X(M,I,g,k)=\varnothing$ when $k<0$.
\end{itemize}
The algorithm terminates after finitely many steps.

In the implementation, the code remembers the ideal of relations $I$.
We consider $I$ as an ideal in $\Bbbk[x_1,\dots,x_r,u]$ where $u$ is an additional variable to make some entries units.
Initially, the ideal $I$ is set to the zero ideal.
When it branches out to the case $f=0$, we add $f$ to the relations $I$ as $f$ must be zero.
When it branches out to the case $f\ne 0$, we add $uf-1$ to the relations $I$ to make $f$ a unit.
Note that if it branches out again to the cases $f'=0$ and $f'\ne 0$, then there is the case that $f$ and $f'$ are both units.
To not introduce a new variable, we eliminate $u$ by replacing $I$ with $I\cap\Bbbk[x_1,\dots,x_r]$ and add the relation $uff'-1$ to $I$.
This elimination is the source of the closure in \eqref{eqn: V(J)}.
By remembering the entries that must be units, we do not need to introduce a new variable other than $u$.

\section{Proof of \Cref{thmintro: rk perm5 is 16}}\label{sec: proof of theorem}
In \cite{MR3987862}, Hauenstein, Oeding, Ottaviani, and Sommese introduced the following higher-order Koszul flattening, which applies the Koszul flattening, introduced by Landsberg and Ottaviani \cite{MR3376667}, to higher-order tensors.

\begin{lemma}[higher-order Koszul flattening, {\cite{MR3376667,MR3987862}}]\label{lem:higher order koszul}
Let $T=\sum_{l=1}^{k}v_{l,1}\otimes \cdots\otimes v_{l,d}\in V_{1}\otimes \cdots\otimes V_{d}$ where $d\ge 3$ and $\dim V_{i}=n_{i}$ for $1\leq i\leq d$.
Fix integers $p_1,\ldots,p_{d-2}$ such that $0\le p_i\le n_i-1$ for all $i=1,\ldots,d-2$. Define $$\begin{aligned}
T^{\wedge(p_{1},\ldots,p_{d-2})}_{(V_{1},\ldots,V_{d-2})}: \left(\bigotimes_{i=1}^{d-2}\bigwedge^{p_{i}}V_{i}\right)\otimes V_{d}^{*}\to\left(\bigotimes_{i=1}^{d-2}\bigwedge^{p_{i}+1}V_{i}\right)\otimes V_{d-1}
\end{aligned}$$
by $$\begin{aligned}
&\left(\bigotimes_{i=1}^{d-2}a_{i,1}\wedge \cdots\wedge a_{i,p_{i}}\right)\otimes f\\
&\mapsto \sum_{l=1}^{k}f(v_{l,d}) \left(\bigotimes_{i=1}^{d-2}v_{l,i}\wedge a_{i,1}\wedge \cdots\wedge a_{i,p_{i}} \right)\otimes v_{l,d-1}
\end{aligned}$$
and extending linearly.
Then $$\begin{aligned}
\left\lceil \frac{\rank(T^{\wedge(p_{1},\ldots,p_{d-2})}_{(V_{1},\ldots,V_{d-2})})}{\binom{n_{1}-1}{p_{1}}\cdots \binom{n_{d-2}-1}{p_{d-2}}}\right\rceil\leq \brk(T).
\end{aligned}$$
\end{lemma}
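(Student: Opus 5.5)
The plan is the standard three-step argument: first show that the flattening is linear in $T$, then show that each rank-one tensor contributes rank exactly $\prod_i\binom{n_i-1}{p_i}$, and finally pass from rank to border rank by semicontinuity.

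\emph{Linearity and subadditivity.} The formula defining $T^{\wedge(p_1,\ldots,p_{d-2})}_{(V_1,\ldots,V_{d-2})}$ is a sum over the summands $v_{l,1}\otimes\cdots\otimes v_{l,d}$. Each summand's contribution is multilinear in $(v_{l,1},\ldots,v_{l,d})$, so the map $T\mapsto T^{\wedge(p_1,\ldots,p_{d-2})}$ is a well-defined linear map on $V_1\otimes\cdots\otimes V_d$. In particular, it does not depend on the chosen decomposition of $T$. Hence if $T=\sum_{l=1}^r T_l$ with each $T_l$ of rank one, the flattening of $T$ is the sum of the flattenings of the $T_l$. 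Subadditivity of matrix rank then gives $\rank(T^{\wedge})\le\sum_l\rank(T_l^{\wedge})$.

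\emph{Rank-one computation.} Take $T_l=v_1\otimes\cdots\otimes v_d$ with all $v_i\neq0$. Then $T_l^{\wedge}$ factors as
\[
\Bigl(\bigotimes_{i=1}^{d-2}(v_i\wedge\cdot)\Bigr)\otimes(v_{d-1}\circ\mathrm{ev}_{v_d}),
\]
where $\mathrm{ev}_{v_d}\colon V_d^*\to\Bbbk$ is evaluation at $v_d$. The rank of a tensor product of linear maps is the product of their ranks. The map $v_{d-1}\circ\mathrm{ev}_{v_d}$ has rank $1$. The map $v_i\wedge\cdot\colon\bigwedge^{p_i}V_i\to\bigwedge^{p_i+1}V_i$ has kernel $v_i\wedge\bigwedge^{p_i-1}V_i$. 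To see this, complete $v_i$ to a basis of $V_i$. The image is then spanned by the wedges $v_i\wedge e_J$ with $J$ a $p_i$-subset of the remaining $n_i-1$ basis vectors, so the rank is $\binom{n_i-1}{p_i}$. Therefore $\rank(T^{\wedge})\le \rk(T)\cdot\prod_i\binom{n_i-1}{p_i}$.

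\emph{Passing to border rank.} The set $\{T:\rank(T^{\wedge})\le N\}$ is Zariski closed, since it is cut out by minors of a matrix whose entries depend linearly on $T$. Set $N=\brk(T)\prod_i\binom{n_i-1}{p_i}$. By the previous step, this closed set contains every tensor of rank at most $\brk(T)$, so it also contains their Zariski closure, and in particular it contains $T$. Dividing by $\prod_i\binom{n_i-1}{p_i}$ and using that $\brk(T)$ is an integer yields the stated ceiling bound. If $\Bbbk$ is not algebraically closed, the closure should be taken over $\overline{\Bbbk}$; this changes nothing, because the minors have coefficients in $\Bbbk$.

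The only step that needs genuine care is the rank computation for $v_i\wedge\cdot$. It is elementary, but the exact count $\binom{n_i-1}{p_i}$ is what fixes the denominator in the bound. Everything else is formal.
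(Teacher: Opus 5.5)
Your proof is correct and follows essentially the same route as the paper. Both arguments use that $T\mapsto T^{\wedge(p_1,\ldots,p_{d-2})}$ is a well-defined linear map sending each nonzero simple tensor to a matrix of rank exactly $q=\prod_i\binom{n_i-1}{p_i}$, and both pass to border rank by closedness of the bounded-rank locus. The paper phrases that last step with secant varieties, placing $\Phi(T)$ in the $\brk(T)$-th secant variety of $\sigma_q(\Seg(\p^{N_1-1}\times\p^{N_2-1}))$, i.e., among matrices of rank at most $q\,\brk(T)$, while you phrase it through vanishing of minors; the content is the same.
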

\begin{proof}
Define a linear map 
\begin{align*}
    \Phi:V_{1}\otimes \cdots\otimes V_{d} &\to \hom(\Bbbk^{N_{1}},\Bbbk^{N_{2}})\\
    \mathcal{T} &\mapsto \mathcal{T}^{\wedge(p_{1},\ldots,p_{d-2})}_{(V_{1},\ldots,V_{d-2})}
\end{align*}
where $N_{1}=\binom{n_{1}}{p_{1}}\cdots \binom{n_{d-2}}{p_{d-2}}n_{d}$ and $N_{2}=\binom{n_{1}}{p_{1}+1}\cdots \binom{n_{d-2}}{p_{d-2}+1}n_{d-1}$.
Note that $\Phi$ is well defined since $\mathcal{T}^{\wedge(p_{1},\ldots,p_{d-2})}_{(V_{1},\ldots,V_{d-2})}$ does not depend on the decomposition of $\mathcal{T}$.
For any nonzero simple tensor $S\in V_{1}\otimes \cdots\otimes V_{d}$, we have $\rank(\Phi(S))=\binom{n_{1}-1}{p_{1}}\cdots \binom{n_{d-2}-1}{p_{d-2}}\eqqcolon q$.
Thus
\begin{align*}
    \brk(T)
    &\geq \brk_{\Phi(\Seg(\mathbb{P}V_{1}\times \cdots\times \mathbb{P}V_{d}))}(\Phi(T))\\
    &\geq \brk_{\sigma_{q}(\Seg(\mathbb{P}_{\Bbbk}^{N_{1}-1}\times \mathbb{P}_{\Bbbk}^{N_{2}-1}))}(\Phi(T))\\
    &=\left\lceil \frac{\rank(\Phi(T))}{q}\right\rceil
\end{align*}
gives the result where $\sigma_r(X)$ denotes the $r$-secant variety of $X$ and $\brk_X(p)$ denotes the minimum integer $r$ such that $p\in\sigma_r(X)$.
\end{proof}

We often denote the higher-order Koszul flattening of $T$ simply by $T^{\wedge(p_{1},\ldots,p_{d-2})}$ when the choice of ${(V_{1},\ldots,V_{d-2})}$ is clear from the context.
If $n_{1}=\cdots=n_{d}=d$ and $p_{i}=i$ for $i=1,\ldots,d-2$, the higher-order Koszul flattening $T^{\wedge(p_{1},\ldots,p_{d-2})}$ is a linear map, which is represented by a square matrix of size $\prod_{i=1}^{d-1}\binom{d}{i}$. Note that, for the determinant tensor $\det_{d}$ of any order $d$, it is known that the higher-order Koszul flattening $(\det_{d})^{\wedge(1,\ldots,d-2)}$ with $(p_1,\dots,p_{d-2})=(1,\dots,d-2)$ has full rank (\cite[Theorem 4.2]{HJK25}) while that of $\per_d$ does not. In practice, we may expect a better lower bound when the induced map is represented by a square matrix since the matrix rank is not restricted by the smaller side. We use the numbers $p_{i}=i$ for $i=1,2,3$ for the higher-order Koszul flattening of $\per_{5}$ as well.

The higher-order Koszul flattening gives us only $\brk(\per_5)\ge 15$ as
\[
    \rank\left((\per_5)^{\wedge(1,2,3)}\right)=1426,
\]
and $\left\lceil\frac{1426}{96}\right\rceil=15$.
To obtain $\rk(\per_5)\ge 16$, we show that $\brk(\per_5-S)\ge 15$ for every nonzero simple tensor \[S=\left(\sum_{j=1}^{5}s_{1,j}e_{j}\right)\otimes\cdots\otimes\left(\sum_{j=1}^{5}s_{5,j}e_{j}\right).\]
Using the symmetry, we divide into cases and reduce the number of parameters as much as possible.
Even so, in each case we have to determine the locus of points $(s_{i,j})_{i,j}$ at which a $2500\times 2500$ matrix with entries in $\Bbbk[(s_{i,j})_{i,j}]$ has rank at most $1344$.
Since computing these loci via the ideal of minors is infeasible, we instead use Conner--Harper--Landsberg's method, which enables us to show that every such locus is empty.

Now we prove the main theorem.

\begin{thmalphabetmaintext}\label{thm: rk perm5 is 16}
	It holds that $\rk(\per_5)=16$ over fields of characteristic zero.
\end{thmalphabetmaintext}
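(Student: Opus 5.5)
The upper bound $\rk(\per_5)\le\wrk(\per_5)\le 2^{4}=16$ is Fischer's formula, so the plan is to prove $\rk(\per_5)\ge 16$. Suppose for contradiction that $\per_5=\sum_{l=1}^{15}S_l$ with each $S_l$ a simple tensor. We may assume $S:=S_{15}\neq 0$; otherwise $\rk(\per_5)\le 14<15\le\brk(\per_5)$, which is already a contradiction. Then $\per_5-S$ has rank at most $14$, so $\brk(\per_5-S)\le 14$. By \Cref{lem:higher order koszul} with $(p_1,p_2,p_3)=(1,2,3)$, we get $\rank\bigl((\per_5-S)^{\wedge(1,2,3)}\bigr)\le 14\cdot 96=1344$. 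It therefore suffices to show that for every nonzero simple tensor $S=\bigotimes_{i=1}^5\bigl(\sum_j s_{i,j}e_j\bigr)$ the $2500\times 2500$ matrix $(\per_5-S)^{\wedge(1,2,3)}$ has rank at least $1345$. The entries of this matrix are affine-linear in the products $s_{1,j_1}\cdots s_{5,j_5}$. In fact, because of the flattening structure, the $S$-part is bilinear in the last two factors, with coefficients that are constants on the first three.

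The first step is to reduce the parameters by symmetry. The stabilizer of $\per_5$ includes permutations of the basis applied simultaneously in all factors, torus actions $(\lambda_{i,j})$ with $\prod_i\lambda_{i,\sigma(i)}$ constant in an appropriate sense (diagonal scalings $\operatorname{diag}(t_{i,1},\dots,t_{i,5})$ in factor $i$ with $\prod_i t_{i,j}=1$ for each $j$), and permutations of the tensor factors. Rank and border rank are invariant under these symmetries, although the particular Koszul flattening is not invariant under factor permutations. I would split according to the supports of the vectors $s_i=(s_{i,1},\dots,s_{i,5})$. Using the torus, each nonzero coordinate can be normalized to $1$ in all but one factor per column index $j$. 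Using simultaneous permutations, the support pattern of $s_1$ can be put in a standard form, for example $s_1=(1,\dots,1,0,\dots,0)$. This leaves finitely many cases, each with a reduced set of free parameters. Scalings of the whole $S$ by $\lambda$ can be absorbed into a factor, since $S$ is a simple tensor.

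In each case I would apply the Conner--Harper--Landsberg recursion of \Cref{sec: preliminaries} to $M=(\per_5-S)^{\wedge(1,2,3)}$ over $\Bbbk[s]$ with $k=1344$ and $g=1$, together with the relations needed to keep $S\neq 0$. Many entries coming from $\per_5$ are nonzero constants, so the algorithm first pivots on these units. Since $\rank(\per_5^{\wedge(1,2,3)})=1426$ and $S$ perturbs the matrix only by a low-rank, structured amount, most of the elimination is unconditional. Branching on polynomial entries $f$ then only occurs in a small residual matrix. The goal is that every branch terminates with $k<0$ or with an ideal containing $1$, which shows $X(M,0,1,1344)=\varnothing$ over $\overline{\Bbbk}$. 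Since the entries have rational coefficients, emptiness over $\overline{\q}$ gives the result over every field of characteristic zero by base change.

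The main obstacle I expect is controlling the branching explosion in the residual matrix. This means choosing pivots well, since low-degree polynomials and monomials are best, and choosing the case split so that each case has few parameters. If a case does not terminate, I would refine it further by splitting on whether particular $s_{i,j}$ vanish. I would also try permuting which factors play the roles of $V_4$ and $V_5$, since the flattening treats the factors asymmetrically, and exploit the tensor-factor symmetry of $\per_5$ to choose the most favorable ordering in each case.
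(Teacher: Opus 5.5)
Your overall plan matches the paper's: reduce to $\brk(\per_5-S)\ge 15$ for every nonzero simple $S$ via the $(1,2,3)$ Koszul flattening with threshold $14\cdot 96=1344$, cut down the parameters of $S$ by symmetries of $\per_5$, and show the rank-$\le 1344$ locus of the $2500\times 2500$ matrix is empty by the Conner--Harper--Landsberg recursion over $\q$. The zero-$S$ case, the base change and the upper bound are handled correctly.

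The step that fails is the symmetry reduction. Your first description of the torus ($\prod_i\lambda_{i,\sigma(i)}$ independent of $\sigma$) is the right condition, but the parenthetical version is wrong. The scaling $(\operatorname{diag}(t_{i,1},\dots,t_{i,5}))_i$ multiplies $e_{\sigma(1)}\otimes\cdots\otimes e_{\sigma(5)}$ by $\prod_i t_{i,\sigma(i)}$. For example, $t_{1,1}=2$, $t_{2,1}=1/2$, all other $t_{i,j}=1$ satisfies $\prod_i t_{i,j}=1$ for every $j$, yet rescales $e_1\otimes e_2\otimes\cdots\otimes e_5$ by $2$. The correct condition forces $t_{i,j}=a_jb_i$. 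Combined with absorbing scalars into $S$ and the trivially acting kernel $(\lambda_i\mathrm{Id})_{\prod\lambda_i=1}$, the net effect on the coordinates of $S$ is only $s_{i,j}\mapsto a_jb_is_{i,j}$ with $\prod_j a_j=\prod_i b_i=1$. This group is $8$-dimensional and fixes every product $\prod_i s_{i,\sigma(i)}$. So you cannot normalize nonzero coordinates to $1$ ``in all but one factor per column index''; that would need a $20$-dimensional torus. Taken literally, when all $s_{i,j}\ne 0$ your recipe would only test $S=(1,\dots,1)^{\otimes 4}\otimes s_5$. A computer check over such cases says nothing about the remaining $S$, so it would not prove the theorem.

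The paper's reduction works as follows. In each $s_i$ pick one nonzero coordinate. Classify by the partition of $5$ recording how these five picks are distributed over the column indices $j$, and move them to a standard position with the two $\mathfrak{S}_5$-actions. Then normalize the five picked entries to $1$ with the two tori. The exception is the partition $(1,1,1,1,1)$, where $s_{1,1}\cdots s_{5,5}$ is invariant and $s_{5,5}$ must stay a free parameter. The partitions are treated in the order $(5),(4,1),(3,2),(3,1,1),(2,2,1),(2,1,1,1),(1,1,1,1,1)$, so in each case every entry that would put $S$ into an earlier case can be set to $0$. This keeps the seven cases exhaustive while reducing the free parameters to $20,19,18,16,14,11,1$. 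Your fallback of splitting on the vanishing of particular $s_{i,j}$ is in this spirit, but it needs this kind of organization to guarantee exhaustiveness.

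A minor, non-load-bearing point: the $S$-part of the flattening is not bilinear in the last two factors with constant coefficients in the first three. Its entries are $0$ or $\pm s_{1,j_1}s_{2,j_2}s_{3,j_3}s_{4,j_4}s_{5,j_5}$, with $s_1,s_2,s_3$ entering through the wedge maps.
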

\begin{proof}
We may assume $\Bbbk$ is algebraically closed.
It is enough to show
\[
    \rk(\per_{5}-S)\ge 15
\]
for every nonzero simple tensor $S=s_{1}\otimes s_{2}\otimes s_{3}\otimes s_{4} \otimes s_{5}$ where $s_{i}=\sum_{j=1}^{5}s_{i,j}e_{j}\in V_{i}$ for each $i=1,\ldots,5$.
We consider the $s_{i,j}$ as variables.
We denote by $M$ the $2500\times 2500$ matrix corresponding to the higher-order Koszul flattening
$$
\begin{aligned}
(\per_{5}-S)^{\wedge(1,2,3)}_{(V_1,V_2,V_3)}: V_{1}\otimes\wedge^{2} V_{2}\otimes\wedge^{3} V_{3}\otimes V_{5}^{*} \to \wedge^{2} V_{1}\otimes\wedge^{3} V_{2}\otimes\wedge^{4} V_{3}\otimes V_{4}
\end{aligned}$$
of $\per_{5}-S\in V_1\otimes V_2\otimes V_3\otimes V_4\otimes V_5$ with respect to $p_1=1$, $p_2=2$, and $p_3=3$.

Let
\[
    G=\left[\Big((\Bbbk^{*})^{4}\rtimes\mathfrak{S}_{5}\Big)\cdot (\Bbbk^{*})^4\right]\rtimes \mathfrak{S}_{5}
\]
be a group acting on $V_{1}\otimes \cdots\otimes V_{5}$ where $A\cdot B$ denotes the subgroup of $\GL(V_1)\times \cdots \times \GL(V_5)$ generated by $A$ and $B$.
It acts on the standard basis as follows.
\begin{enumerate}[label=(\alph*)]
    \item\label{item: a} The first $(\Bbbk^{*})^{4}$:
    \[
        (a_{1},\ldots,a_{4})\cdot(e_{i_{1}}\otimes \cdots\otimes e_{i_{5}})=a_{i_{1}}e_{i_{1}}\otimes \cdots\otimes a_{i_{5}}e_{i_{5}}
    \]
    where $a_5=(a_1a_2a_3a_4)^{-1}$.
    \item\label{item: b} The first $\mathfrak{S}_{5}$:
    \[
        \sigma\cdot(e_{i_{1}}\otimes \cdots \otimes e_{i_{5}})=e_{\sigma(i_{1})}\otimes \cdots\otimes e_{\sigma(i_{5})}.
    \]
    \item\label{item: c} The second $(\Bbbk^{*})^{4}$:
    \[
        (a_{1},\ldots,a_{4})\cdot(e_{i_{1}}\otimes \cdots\otimes e_{i_{5}})=a_{1}e_{i_{1}}\otimes \cdots\otimes a_{5}e_{i_{5}}
    \]
    where $a_5=(a_1a_2a_3a_4)^{-1}$.
    \item\label{item: d} The second $\mathfrak{S}_{5}$:
    \[
        \sigma\cdot(e_{i_{1}}\otimes \cdots \otimes e_{i_{5}})=e_{i_{\sigma^{-1}(1)}}\otimes \cdots\otimes e_{i_{\sigma^{-1}(5)}}.
    \]
\end{enumerate}
Each action extends linearly to $V_{1}\otimes \cdots\otimes V_{5}$.

Usually, the \emph{symmetry group of a tensor} $T\in V_1\otimes \cdots \otimes V_n$ is defined as the stabilizer of $T$ in $\GL(V_1)\times \cdots \times \GL(V_n)/(\Bbbk^*)^{n-1}$ where
\[
    (\Bbbk^*)^{n-1}=\{(\lambda_1\mathrm{Id}_{V_1},\dots,\lambda_n\mathrm{Id}_{V_n})\mid \prod_{i=1}^{n}\lambda_i=1\}
\]
is the kernel of $\GL(V_1)\times \cdots \times \GL(V_n)\to \GL(V_1\otimes\cdots\otimes V_n)$ (see, e.g., \cite{MR4595287}).
However, we do not take the quotient of the group by $(\Bbbk^*)^{n-1}$: although $(\Bbbk^*)^{n-1}$ acts trivially on $V_1\otimes \cdots \otimes V_n$, it acts nontrivially on the coefficients of the simple tensor $S$, and we exploit this also to reduce the number of parameters representing $S$.
One can easily see
\[
    \Big((\Bbbk^{*})^4\rtimes\mathfrak{S}_5\Big)\cdot (\Bbbk^{*})^4\subseteq \mathrm{Stab}_{\GL(V_1)\times \cdots \times \GL(V_5)}(\per_5)
\]
where $\mathrm{Stab}$ denotes the stabilizer and the second $(\Bbbk^*)^4$ denotes the kernel described above.
We also utilize the group action \ref{item: d} by $\mathfrak{S}_{5}$ permuting the factors to further reduce the number of cases.
Then it holds that
\[
    G=\left[\Big((\Bbbk^{*})^{4}\rtimes\mathfrak{S}_{5}\Big)\cdot (\Bbbk^{*})^4\right]\rtimes \mathfrak{S}_{5}\subseteq \mathrm{Stab}_{(\GL(V_1)\times \cdots \times \GL(V_5))\rtimes \mathfrak{S}_5}(\per_5).
\]
Each of the four actions \ref{item: a}--\ref{item: d} is used in the case analysis below.

Note that the $((\GL(V_1)\times \cdots \times \GL(V_5))\rtimes \mathfrak{S}_5)$-action preserves the tensor rank and the border rank, and hence so does the $G$-action.
Thus it holds that
\[
    \brk(\per_5-S)=\brk(g\cdot(\per_5-S))=\brk(\per_5-g\cdot S)
\]
for every $g\in G$.
Hence it is enough to show $\brk(\per_5-S)\ge 15$ for one $S$ in each $G$-orbit.

As $S\ne 0$, it holds that for each $i$, the coefficient $s_{i,j}$ is nonzero for some $j$.
Hence we may choose five nonzero coefficients, one in each column of the transposed matrix $((s_{i,j})_{1\le i,j\le 5})^{\mathrm{T}}$.
We consider cases according to the positions of these five nonzero coefficients: each partition of 5 records how many of these five coefficients lie in each row.
For example, the partition $(2,2,1)$ corresponds to the case that there are two of them in some row, two of them in another row, and one in a third row.
There are seven cases, corresponding to the seven partitions:
\begin{align*}
    &(5),\ (4,1),\ (3,2),\ (3,1,1), \\
    &(2,2,1),\ (2,1,1,1),\ (1,1,1,1,1).
\end{align*}
In the diagrams below, the box in the $i$-th column and $j$-th row represents $s_{i,j}$.
A gray box means the entry is normalized to 1, a white box means the entry is assumed to be 0.
Boxes not shown are free parameters.
\\
\\
\noindent
\textbf{Case $(5)$.}
There exists $j\in\{1,2,3,4,5\}$ such that $s_{1,j},s_{2,j},s_{3,j},s_{4,j}$, and $s_{5,j}$ are nonzero.
Using the first $\mathfrak{S}_5$-action \ref{item: b}, we may assume $j=1$.
Using the first $(\Bbbk^*)^4$-action \ref{item: a}, we may assume $s_{1,1}s_{2,1}s_{3,1}s_{4,1}s_{5,1}=1$.
Using the second $(\Bbbk^*)^4$-action \ref{item: c}, we may assume $s_{1,1}=s_{2,1}=s_{3,1}=s_{4,1}=s_{5,1}=1$.
This case has 20 free parameters.
\[
    \setlength{\arraycolsep}{0pt}
    \renewcommand{\arraystretch}{0}
    \begin{array}{ccccc}
    \G & \G & \G & \G & \G
    \end{array}
\]
\\
\noindent
\textbf{Case $(4,1)$.}
There exist $j_1,j_2\in\{1,2,3,4,5\}$ and a permutation $(i_1,\dots,i_5)$ of $(1,2,3,4,5)$ such that $s_{i_1,j_1},s_{i_2,j_1},s_{i_3,j_1},s_{i_4,j_1}$, and $s_{i_5,j_2}$ are nonzero.
Using the first $\mathfrak{S}_5$-actions \ref{item: b}, we may assume $j_1=1$ and $j_2=2$.
Using the second $\mathfrak{S}_5$-action \ref{item: d}, we may assume $i_1=1$, $i_2=2$, $i_3=3$, $i_4=4$, $i_5=5$.
Using the first $(\Bbbk^*)^4$-actions \ref{item: a}, we may assume $s_{1,1}s_{2,1}s_{3,1}s_{4,1}=1$ and $s_{5,2}=1$.
Using the second $(\Bbbk^*)^4$-action \ref{item: c}, we may assume $s_{1,1}=s_{2,1}=s_{3,1}=s_{4,1}=1$.
Then we can also assume $s_{5,1}=0$ since if not, it reduces to Case $(5)$.
This case has 19 free parameters.
\[
    \setlength{\arraycolsep}{0pt}
    \renewcommand{\arraystretch}{0}
    \begin{array}{ccccc}
        \G & \G & \G & \G & \W \\
        \N & \N & \N & \N & \G \\
    \end{array}
\]
\\
\noindent
\textbf{Case $(3,2)$.}
As before, we may assume $s_{1,1}=s_{2,1}=s_{3,1}=s_{4,2}=s_{5,2}=1$ using $G$-actions.
Then we can also assume $s_{4,1}=s_{5,1}=0$ since if not, it reduces to Case $(4,1)$.
This case has 18 free parameters.
\[
    \setlength{\arraycolsep}{0pt}
    \renewcommand{\arraystretch}{0}
    \begin{array}{ccccc}
        \G & \G & \G & \W & \W \\
        \N & \N & \N & \G & \G \\
    \end{array}
\]
\\
\noindent
\textbf{Case $(3,1,1)$.}
As before, we may assume $s_{1,1}=s_{2,1}=s_{3,1}=s_{4,2}=s_{5,3}=1$ using $G$-actions.
We can also assume $s_{4,1}=s_{5,1}=0$ since if not, it reduces to Case $(4,1)$, and $s_{4,3}=s_{5,2}=0$ since if not, it reduces to Case $(3,2)$.
This case has 16 free parameters.
\[
    \setlength{\arraycolsep}{0pt}
    \renewcommand{\arraystretch}{0}
    \begin{array}{ccccc}
        \G & \G & \G & \W & \W \\
        \N & \N & \N & \G & \W \\
        \N & \N & \N & \W & \G \\
    \end{array}
\]
\\
\noindent
\textbf{Case $(2,2,1)$.}
As before, we may assume $s_{1,1}=s_{2,1}=s_{3,2}=s_{4,2}=s_{5,3}=1$ using $G$-actions.
We can also assume $s_{5,1}=s_{5,2}=0$ by Case $(3,2)$, and $s_{1,2}=s_{2,2}=s_{3,1}=s_{4,1}=0$ by Case $(3,1,1)$.
This case has 14 free parameters.
\[
    \setlength{\arraycolsep}{0pt}
    \renewcommand{\arraystretch}{0}
    \begin{array}{ccccc}
        \G & \G & \W & \W & \W \\
        \W & \W & \G & \G & \W \\
        \N & \N & \N & \N & \G \\
    \end{array}
\]
\\
\noindent
\textbf{Case $(2,1,1,1)$.}
As before, we may assume $s_{1,1}=s_{2,1}=s_{3,2}=s_{4,3}=s_{5,4}=1$ using $G$-actions.
We can also assume $s_{3,1}=s_{4,1}=s_{5,1}=0$ by Case $(3,1,1)$, and $s_{3,3}=s_{3,4}=s_{4,2}=s_{4,4}=s_{5,2}=s_{5,3}=0$ by Case $(2,2,1)$.
This case has 11 free parameters.
\[
    \setlength{\arraycolsep}{0pt}
    \renewcommand{\arraystretch}{0}
    \begin{array}{ccccc}
        \G & \G & \W & \W & \W \\
        \N & \N & \G & \W & \W \\
        \N & \N & \W & \G & \W \\
        \N & \N & \W & \W & \G \\
    \end{array}
\]
\\
\noindent
\textbf{Case $(1,1,1,1,1)$.}
As before, we may assume $s_{1,1},s_{2,2},s_{3,3},s_{4,4}$, and $s_{5,5}$ are nonzero using the first and second $\mathfrak{S}_5$-actions.
Using the first $(\Bbbk^*)^4$-actions \ref{item: a}, we may assume $s_{1,1}=s_{2,2}=s_{3,3}=s_{4,4}=1$.
In this case, we cannot assume $s_{5,5}=1$ even if it is nonzero, unlike the other cases.
This is because neither of the $(\Bbbk^*)^4$-actions \ref{item: a} and \ref{item: c} changes $s_{1,1}s_{2,2}s_{3,3}s_{4,4}s_{5,5}$.
As before, we can assume $s_{i,j}=0$ for any $i\neq j$ since if not, it reduces to the previous cases.
This case has one free parameter $s_{5,5}$.
\[
    \setlength{\arraycolsep}{0pt}
    \renewcommand{\arraystretch}{0}
    \begin{array}{ccccc}
        \G & \W & \W & \W & \W \\
        \W & \G & \W & \W & \W \\
        \W & \W & \G & \W & \W \\
        \W & \W & \W & \G & \W \\
        \W & \W & \W & \W & \N \\
    \end{array}
\]
\\
\\
\indent
By \Cref{lem:higher order koszul}, it is enough to show that
\[
    \rank(M(s_{1,1},s_{1,2},\dots,s_{5,5}))>14\binom{4}{1}\binom{4}{2}\binom{4}{3}=1344
\]
for every $(s_{1,1},s_{1,2},\dots,s_{5,5})$ in each of the seven cases above.

For each case, we apply Conner--Harper--Landsberg's method to compute the locus of $\rank M((s_{i,j})_{i,j})\le 1344$ using Macaulay2 \cite{M2}.
The code can be found at \url{https://github.com/jihan099/rkperm5}.
In the code, the computation is done over $\q$.
This suffices for our purpose: since the matrix $M$ has entries in $\q[(s_{i,j})_{i,j}]$ and every step of the algorithm involves only arithmetic operations with rational coefficients, one may regard the computation as being carried out over $\Bbbk$, where it happens that all the coefficients involved lie in $\q$.
The code shows that the resulting ideal contains 1 in each case, so each locus is empty over $\Bbbk$.
Therefore we get
\[
    \rank\left((\per_{5}-S)^{\wedge(1,2,3)}_{(V_1,V_2,V_3)}\right)\ge 1345,
\]
which implies, by \Cref{lem:higher order koszul}, that $\brk(\per_{5}-S)\geq 15$ for every $S$ in the above seven cases, hence for every nonzero simple tensor $S$.
This shows $\rk(\per_5)\ge 16$.
Since it is known that $\rk(\per_5)\le 16$ over $\q$ by Fischer's formula, we conclude that $\rk(\per_5)=16$ over $\Bbbk$.
\end{proof}

\section{On lower bounds for $\brk(\per_d)$}
For $d\ge 8$, the best known lower bound for $\rk(\per_d)$ is $\binom{d}{\lfloor \frac{d}{2}\rfloor}+1$ (\cite{MR4822414}) while that for $\brk(\per_d)$ is $\binom{d}{\lfloor\frac{d}{2}\rfloor}$ (\cite{MR3494510}).
Here we remark that the lower bound for $\brk(\per_d)$ can be improved to $\binom{d}{\lfloor \frac{d}{2}\rfloor}+1$ over $\c$ and its subfields using a well-known argument.

\begin{proposition}\label{prop:brk_large_d}
It holds that $$\brk(\per_{d})\geq \binom{d}{\lfloor d/2\rfloor}+1$$
over $\c$ and its subfields for $d\ge 3$.
\end{proposition}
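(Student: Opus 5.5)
Derksen's bound $\brk(\per_d)\ge N\coloneqq\binom{d}{\lfloor d/2\rfloor}$ comes from the flattening
\[
\varphi\colon V_1^*\otimes\cdots\otimes V_k^*\to V_{k+1}\otimes\cdots\otimes V_d,\qquad k=\lfloor d/2\rfloor,
\]
where $\rank\varphi(\per_d)=N$. The image of $\varphi(\per_d)$ is the span of the tensors $\sum_{\tau}e_{\tau(1)}\otimes\cdots\otimes e_{\tau(d-k)}$, one for each $(d-k)$-subset of $\{1,\dots,d\}$, with $\tau$ ranging over bijections onto that subset. The plan is to argue by contradiction, combining this flattening with the well-known argument that upgrades a rank bound to a border rank bound by $1$ (as in Houston--Goucher--Johnston, but for border rank).

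Over $\c$ we have $\brk(T)=r$ if and only if $T$ is a limit, in the Euclidean topology, of rank-$r$ tensors. Since border rank over a subfield is at least border rank over $\c$, it suffices to work over $\c$.

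Suppose $\brk(\per_d)=N$, so $\per_d=\lim_{t\to0}T_t$ with $T_t=\sum_{l=1}^N a_{l,1}(t)\otimes\cdots\otimes a_{l,d}(t)$. Then $\varphi(T_t)$ has rank at most $N$ and converges to $\varphi(\per_d)$, which has rank exactly $N$. For $t$ small, $\varphi(T_t)$ therefore has rank exactly $N$. Its image is spanned by the $N$ vectors $a_{l,k+1}(t)\otimes\cdots\otimes a_{l,d}(t)$, which are then linearly independent, and this image converges in the Grassmannian to $W\coloneqq\Im\varphi(\per_d)$. Passing to a subsequence, each normalized rank-one tensor $[a_{l,k+1}\otimes\cdots\otimes a_{l,d}]$ converges in projective space. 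This is where compactness, hence $\c$, is used. We obtain $N$ points of $\Seg(\p V_{k+1}\times\cdots\times\p V_d)$, all lying in $\p W$.

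The key step is to show that $\p W\cap\Seg$ contains no (or too few) points. If $W$ is the span of the symmetrized subset tensors above, I expect a rank-one tensor $v_{k+1}\otimes\cdots\otimes v_d$ cannot lie in $W$ when $d-k\ge2$. The reason is that elements of $W$ are symmetric, and they vanish on all basis tensors with a repeated index. A symmetric rank-one tensor has the form $v^{\otimes(d-k)}$, and its coefficient on $e_j^{\otimes(d-k)}$ is $v_j^{d-k}$, so $v=0$. This rules out the case $k=\lfloor d/2\rfloor$ directly, once the above argument produces even a single point of $\p W\cap\Seg$. Since the Segre variety is closed, this gives the contradiction, and so $\brk(\per_d)\ge N+1$.

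The main obstacle is the step where the limits of the individual rank-one pieces land in $W$. The point is that $\lim\Im\varphi(T_t)=W$ holds in the Grassmannian, and each $[a_{l,k+1}(t)\otimes\cdots]$ lies in $\Im\varphi(T_t)$, so any limit point lies in $W$ by closedness of the incidence correspondence. This step needs care but is standard. One should also double-check that the case $d-k\ge2$ holds for all $d\ge3$, and that the symmetry of elements of $W$ is genuine, which it is because $\per_d$ is symmetric.
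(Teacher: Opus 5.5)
Your proof is correct and follows essentially the same route as the paper. You assume border rank $N$ and take rank-$\le N$ approximants. You show the images of their flattenings converge to $W=\Im\varphi(\per_d)$, extract a limit of one normalized rank-one piece by compactness of the Segre variety, and get a contradiction because $W$ contains no nonzero simple tensor (a symmetric simple tensor must have a nonzero coefficient on some $e_j^{\otimes(d-k)}$). The differences are only technical. You use lower semicontinuity of rank, so the $N$ pieces are automatically independent, and you get convergence of images from continuity of $A\mapsto\Im A$ on the constant-rank locus. The paper instead pads the pieces to an independent set, extracts a limit in the Grassmannian by compactness, and identifies it with $W$ by containment plus a dimension count.
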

\begin{proof}
It is enough to prove the statement over $\c$.
Let $T\coloneqq \per_{d}$ and $\{i_{1},\ldots,i_{a}\}\sqcup\{j_{1},\ldots,j_{b}\}$ be a partition of $\{1,\ldots,d\}$ with $a\ge 1$, $b\ge 2$, and $b=d-a$. Suppose that $\brk(T)=\rank(f_{i_{1},\ldots,i_{a}}(T))$ where $$f_{i_{1},\ldots,i_{a}}:V_{1}\otimes \cdots\otimes V_{d}\to \hom(V_{i_{1}}^{*}\otimes \cdots\otimes V_{i_{a}}^{*},V_{j_{1}}\otimes \cdots\otimes V_{j_{b}})$$ is the canonical isomorphism.
Then $\brk(T)=\binom{d}{a}$ so that $T=\lim_{ k \to \infty }T_{k}$ for some $T_{k}\in V_{1}\otimes \cdots\otimes V_{d}$ with $\rk(T_{k})=\binom{d}{a}$. Let $L_{k}=\Im(f_{i_{1},\ldots,i_{a}}(T_{k}))$ and $L=\Im(f_{i_{1},\ldots,i_{a}}(T))$. Then there are $\binom{d}{a}$ independent simple tensors $w_{k,1},\ldots,w_{k,\binom{d}{a}}\in V_{j_1}\otimes\cdots\otimes V_{j_b}$ for each $k$ such that $$L_{k}\subseteq\left\langle w_{k,1},\ldots,w_{k,\binom{d}{a}}\right\rangle\eqqcolon W_{k}.$$ Indeed, we may take the parts in $V_{j_1}\otimes\cdots\otimes V_{j_b}$ of the simple tensors in the decomposition of $T_k$. If those are not linearly independent, we keep a maximal linearly independent subset of them and replace the remaining ones by any other simple tensors in $V_{j_1}\otimes\cdots\otimes V_{j_b}$ that make the entire set linearly independent. By the compactness of $\Gr\left(\binom{d}{a},V_{j_{1}}\otimes \cdots\otimes V_{j_{b}}\right)$, there is a subsequence $$W_{k_{1}},W_{k_{2}},W_{k_{3}},\ldots$$ converging to some $W\in \Gr\left(\binom{d}{a},V_{j_{1}}\otimes \cdots\otimes V_{j_{b}}\right)$.
After reindexing, we may assume $\{W_k\}_{k\in\mathbb{N}}$ converges to $W$. As $f_{i_{1},\ldots,i_{a}}(T)(v)=\lim_{ k \to \infty }f_{i_{1},\ldots,i_{a}}(T_k)(v)$ for any $v\in V_{i_{1}}^*\otimes \cdots\otimes V_{i_{a}}^*$ and $f_{i_{1},\ldots,i_{a}}(T_k)(v)\in W_{k}$ for each $k\geq1$, we have $f_{i_{1},\ldots,i_{a}}(T)(v)\in W$ which means $L\subseteq W$. Since $\dim(W)=\binom{d}{a}=\dim(L)$, we get $L=W$.

By the compactness of $\Seg(\p V_{j_{1}}\times \cdots\times \mathbb{P}V_{j_{b}})$, there is a subsequence of $\{w_{k,1}\}_{k\in\mathbb{N}}$ such that $$[w_{k_1,1}],[w_{k_2,1}],[w_{k_3,1}],\ldots$$ converges to $[w]$ for some $[w]\in \Seg(\mathbb{P}V_{j_{1}}\times \cdots\times \mathbb{P}V_{j_{b}})$ where $[\cdot]$ denotes the canonical projection. 
After reindexing, we may assume $\{[w_{k,1}]\}_{k\in\mathbb{N}}$ converges to $[w]$.
Since $w_{k,1}\in W_k$ for each $k\geq1$, we have $w\in W$. Hence $w\in L$. This is a contradiction since $$L=\left\langle\sum_{\sigma\in \mathfrak{S}_{b}}^{} e_{l_{\sigma(1)}}\otimes \cdots\otimes e_{l_{\sigma(b)}}\mid 1\leq l_{1}<\cdots<l_{b}\leq d \right\rangle$$ does not contain a nonzero simple tensor when $b\geq 2$. This is because $L\subseteq \Sym^{b}(\c^d)$ after identifying $V_{j_1},\ldots,V_{j_b}$ with $\mathbb C^d$ and a nonzero symmetric simple tensor must contain a nonzero coefficient for $(e_i)^{\otimes b}$ for some $i$ when one represents it as a linear combination of the standard basis $\{e_{l_1}\otimes\cdots\otimes e_{l_b}\mid 1\le l_1,\ldots,l_b\le d\}$ whereas $L$ does not. Hence $\brk(T)>\rank(f_{i_{1},\ldots,i_{a}}(T))=\binom{d}{a}$ when $1\leq a\leq d-2$. In particular, when $a=\lfloor d/2\rfloor$, we get the result.
\end{proof}

Note that while the tensor rank of $\per_5$ is settled in \Cref{thm: rk perm5 is 16}, the border rank is not: it remains open whether $\brk(\per_5)$ is $15$ or $16$.
In view of the pattern for $\per_3$ and $\per_4$, where the border rank matches the tensor rank, it is natural to expect that the same holds for $\per_5$.
We therefore pose the following question.

\begin{question}
    Does $\brk(\per_5)=16$ hold over $\c$?
    More generally, does $\brk(\per_d)=2^{d-1}$ hold for all $d\ge 5$?
\end{question}

\bibliographystyle{amsalpha}
\bibliography{ref.bib}
\end{document}